\newtheorem{lemma}{Lemma}[section]
\newtheorem{teo}[lemma]{Theorem}
\newtheorem{prop}[lemma]{Proposition}
\newtheorem{cor}[lemma]{Corollary}
\theoremstyle{definition}
\newtheorem{quest}[lemma]{Question}
\newtheorem{example}[lemma]{Example}
\theoremstyle{remark}
\newcommand{\Iso}{{\rm Isom}}
\newcommand{\matR} {\ensuremath {\mathbb{R}}}
\newcommand{\matF} {\ensuremath {\mathbb{F}}}
\newcommand{\matH} {\ensuremath {\mathbb{H}}}
\newcommand{\matZ} {\ensuremath {\mathbb{Z}}}
\newcommand{\scrM}{\ensuremath {\mathscr M}}
\newcommand{\scrN}{\ensuremath {\mathscr N}}
\newcommand{\scrD}{\ensuremath {\mathscr D}}
\newcommand{\scrZ}{\ensuremath {\mathscr Z}}
\newcommand{\scrP}{\ensuremath {\mathscr P}}
\newcommand{\scrQ}{\ensuremath {\mathscr Q}}
\newcommand{\isom}{\cong}
\newcommand{\Vol}{{\rm Vol}}
\newcommand{\prfend}{{\hfill\hbox{$\square$}\vspace{2pt}}}
\author{Alexander Kolpakov}
\address{University of Toronto, Department of Mathematics, 40 St. George Street, Toronto ON, M5S 2E4, Canada}
\email{kolpakov dot alexander at gmail dot com}
\author{Bruno Martelli}
\address{Universit\`{a} di Pisa, Dipartimento di Matematica ``Tonelli'', Largo Pon\-te\-cor\-vo 5, 56127 Pisa, Italy}
\email{martelli at dm dot unipi dot it}
\author{Steven Tschantz}
\address{Vanderbilt University, Department of Mathematics, 1326 Stevenson Center, Nashville, TN 37240, USA}
\email{steven dot tschantz at vanderbilt dot edu}
\thanks{The first named author is supported by the SNSF researcher scholarship P300P2-151316. The second named author was supported by the Italian FIRB project ``Geometry and topology of low-dimensional manifolds'', RBFR10GHHH}
\title[]{Some hyperbolic three-manifolds \\ that bound geometrically}
\begin{document}

\begin{abstract}
A closed connected hyperbolic $n$-manifold \emph{bounds geometrically} if it is isometric to the geodesic boundary of a compact hyperbolic $(n+1)$-manifold. A.~Reid and D.~Long have shown by arithmetic methods the existence of infinitely many manifolds that bound geometrically in every dimension.

We construct here infinitely many explicit examples in dimension $n=3$ using right-angled dodecahedra and 120-cells and a simple colouring technique introduced by M.~Davis and T.~Januszkiewicz. Namely, for every $k\geqslant 1$,  we build an orientable compact closed 3-manifold tessellated by $16k$ right-angled dodecahedra that bounds a 4-manifold tessellated by $32k$ right-angled 120-cells.

A notable feature of this family is that the ratio between the volumes of the 4-manifolds and their boundary components is constant and, in particular, bounded. 
\end{abstract}

\maketitle

\section{Introduction}\label{section:1}

The study of hyperbolic manifolds that bound geometrically dates back to the works of D.~Long, A.~Reid \cite{LR1, LR2} and B. Niemershiem \cite{N}, motivated by a preceding work of M.~Gromov \cite{G1, G2} and a question by F. Farrell and S. Zdravkovska \cite{FZ}. This question is also related to hyperbolic instantons, as described by J.~Ratcliffe and S.~Tschantz \cite{RT1, RT2}. In particular, the following problems are of particular interest:
\begin{quest}\label{question1}
Which compact orientable hyperbolic $n$-manifold $\mathscr{N}$ can represent the totally geodesic boundary of a compact orientable hyperbolic $(n+1)$-manifold $\mathscr{M}$?
\end{quest}
\begin{quest}\label{question2}
Which compact orientable flat $n$-manifold $\mathscr{N}$ can represent the cusp section of a single-cusped orientable hyperbolic $(n+1)$-manifold $\mathscr{M}$?
\end{quest}
Once there exist such manifolds $\mathscr{N}$ and $\mathscr{M}$, we say that $\mathscr{N}$ \emph{bounds $\mathscr{M}$ geometrically}. In this note, we shall concentrate on Question~\ref{question1}, devoted to compact geometric boundaries. The recent progress on Question~\ref{question2}, that involves cusp sections,  is indicated by \cite{KM, LR2, McR, McRRS}. However, this is still an open problem in dimensions $\geq 5$. On the other hand, by a result of M. Stover \cite{Stover}, an arithmetic orbifold in dimension $\geq 30$ cannot have a single cusp.

In \cite{LR1}, D. Long and A. Reid have shown that many closed hyperbolic 3-manifolds do not bound geometrically: a necessary condition is that the eta invariant of the 3-manifold must be an integer. The first known closed hyperbolic 3-manifold that bounds geometrically was constructed by J. Ratcliffe and S. Tschantz in \cite{RT1} and has volume of order $200$.

Then, D. Long and A. Reid produced in \cite{LR3}, by arithmetic techniques, infinitely many orientable hyperbolic $n$-manifolds $\scrN$ that bound geometrically an $(n+1)$-manifold $\scrM$, in every dimension $n\geqslant 2$. Every such manifold $\scrN$ is obtained as a cover of some $n$-orbifold $O_\scrN$ geodesically immersed in a suitable $(n+1)$-orbifold $O_\scrM$. 

In this paper, we construct  an explicit infinite family in dimension $n=3$, via a similar covering technique where the roles of $O_\mathscr N$ and $O_\mathscr M$ are played by the right-angled dodecahedron $\mathscr D$ and 120-cell $\mathscr Z$. These two compact Coxeter right-angled regular polytopes exist in $\matH^3$ and $\matH^4$ respectively, and the first is a facet of the second. The existence of suitable finite covers is guaranteed here by assigning appropriate \emph{colourings} to their facets, following A.~Vesnin \cite{Vesnin87, Vesnin}, M.~Davis and T.~Januszkiewicz \cite{DJ}, I.~Izmestiev \cite{I}. 

A colouring determines a manifold covering, and the main factual observation is that a colouring of the dodecahedron $\mathscr{D}$ can be enhanced in a suitable way to a colouring of the right-angled hyperbolic Coxeter $120$-cell $\mathscr{Z}$. 
We produce in this way a degree-$32$ orientable cover of $\mathscr Z$ that contains four copies of a non-orientable degree-$8$ cover of $\mathscr{D}$. By cutting along one such non-orientable geodesic submanifold we get a hyperbolic four-manifold $\scrN_1$ with connected geodesic boundary $\scrM_1 = \partial \scrN_1$.

The colouring technique applied to a single polytope can produce only finitely many manifolds. To get infinitely many examples we assemble $n$ copies of $\scrD$ and $\scrZ$ to get more complicated right-angled polytopes, to which the above construction easily extends. We finally obtain the following. Let $V_\scrD \approx 4.3062...$ and $V_\scrZ = \frac{34}3 \pi^2$ be the volumes of $\scrD$ and $\scrZ$, respectively.

\vspace*{-0.01in}

\begin{teo}\label{theorem}
For every $n\geqslant 1$ there exists an orientable compact hyperbolic three-manifold $\scrN_n$ of volume $16 n V_\scrD$ which bounds geometrically an orientable compact hyperbolic four-manifold $\scrM_n$ of volume $32 n V_\scrZ$. 

The manifolds $\scrN_n$ and $\scrM_n$ are tessellated respectively by $16 n$ right-angled dodecahedra and $32 n$ right-angled $120$-cells. 
\end{teo}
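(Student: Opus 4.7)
The plan is to realise both $\scrN_n$ and $\scrM_n$ simultaneously by the colouring construction of Davis and Januszkiewicz, applied to right-angled Coxeter polytopes in $\matH^3$ and $\matH^4$. Recall that a $(\matZ/2\matZ)^r$-\emph{colouring} of a compact right-angled polytope $\calP$ assigns to each facet an element of $(\matZ/2\matZ)^r$ such that the colours of facets sharing a codimension-two face are linearly independent. Such a colouring determines a closed hyperbolic manifold tessellated by $2^s$ copies of $\calP$, where $s$ is the dimension of the span of the colours. Moreover, if $\calQ \subset \calP$ is a codimension-one reflection sub-polytope that is itself right-angled, then the ambient colouring restricts (under a non-degeneracy assumption) to a valid colouring of $\calQ$, and the manifold associated with $\calQ$ appears as a totally geodesic submanifold of the one associated with $\calP$.

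First I would settle the base case $n=1$. The strategy is to exhibit an explicit colouring $\lambda_\scrZ$ of the $120$-cell $\scrZ$ such that (i) $\lambda_\scrZ$ is valid on $\scrZ$, (ii) its restriction $\lambda_\scrD$ to one chosen dodecahedral facet is a valid colouring of $\scrD$, (iii) the associated manifold $\widetilde{\scrZ}$ is orientable of degree $32$, and (iv) the associated manifold $\widetilde{\scrD}$ is non-orientable of degree $8$. Orientability is detected by a standard criterion on the span of the colours. Inside $\widetilde{\scrZ}$ the hypersurface corresponding to the distinguished facet decomposes as a disjoint union of four copies of $\widetilde{\scrD}$. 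Because $\widetilde{\scrD}$ is non-orientable but two-sided inside the orientable $\widetilde{\scrZ}$, a regular neighbourhood of $\widetilde{\scrD}$ is a twisted interval bundle over it, and cutting $\widetilde{\scrZ}$ along one copy of $\widetilde{\scrD}$ yields a compact hyperbolic four-manifold $\scrM_1$ whose connected geodesic boundary $\scrN_1$ is the orientation double cover of $\widetilde{\scrD}$, a closed orientable three-manifold tessellated by $16$ right-angled dodecahedra. The stated volumes follow by counting.

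To construct the infinite family I would assemble, for each $n \geq 1$, a right-angled hyperbolic polytope $\scrZ^{(n)} \subset \matH^4$ obtained by gluing $n$ copies of $\scrZ$ along a chain of facets, together with a corresponding $\scrD^{(n)} \subset \matH^3$ built from $n$ copies of $\scrD$ and realised as a codimension-one sub-polytope of $\scrZ^{(n)}$. The colourings $\lambda_\scrZ$ and $\lambda_\scrD$ extend coherently to colourings of $\scrZ^{(n)}$ and $\scrD^{(n)}$: shared facets are identified, while the newly exposed facets reuse existing labels subject to the independence condition. The base-case argument applied to $\scrZ^{(n)}$ then yields $\scrM_n$ and $\scrN_n$ with the prescribed tessellations and volumes $32 n V_\scrZ$ and $16 n V_\scrD$.

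The principal obstacle is the combinatorial one of producing a colouring of $\scrZ$, and subsequently of each $\scrZ^{(n)}$, satisfying all four conditions above simultaneously. Given the size of $\scrZ$ (with $120$ facets and $720$ ridges) and the number of constraints to verify, I would expect this step to be carried out by an explicit search in a low-dimensional $(\matZ/2\matZ)$-vector space, exploiting the large symmetry group of the $120$-cell to keep the problem tractable, and then by a direct argument showing that the chosen colouring survives coherently under the $n$-fold assembly.
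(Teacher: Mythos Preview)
Your approach coincides with the paper's: an orientable $\matF_2^5$-colouring of $\scrZ$ inducing a non-orientable $\matF_2^3$-colouring on a dodecahedral facet, cutting the resulting orientable four-manifold along the non-orientable geodesic hypersurface, and then passing to an $n$-fold assembly of $120$-cells (with the chain of dodecahedra sitting inside as a facet).

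Two remarks. First, a non-orientable hypersurface in an orientable ambient manifold is \emph{one}-sided, not two-sided; your subsequent description of the regular neighbourhood as a twisted interval bundle with boundary the orientation double cover is the correct picture, but the word ``two-sided'' contradicts it. Second, the step you leave as ``a direct argument''---that the induced colouring on the assembled $\scrD^{(n)}$ remains non-orientable---is exactly where the paper inserts a small idea you should make explicit. Merely propagating the colouring by mirroring across the glued pentagons gives a valid colouring of the assembly, but one still has to rule out that the assembled colouring has become orientable. The paper observes that each non-orientable $\matF_2^3$-colouring of $\scrD$ has three faces whose colours sum to $\mathbf 0$ (a certificate of non-orientability, since no linear functional can take value $1$ on all three), and that there is a fourth face $F^\ast$ disjoint from these three; performing the connected sums along $F^\ast$ leaves the three witness faces intact as facets of $\scrD^{(n)}$, so the certificate, and hence non-orientability, persists for every $n$.
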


An interesting feature of this construction is that it provides manifolds $\scrN_n$ and $\scrM_n$ of controlled volume. In particular, we deduce the following.

\begin{cor}
There are infinitely many hyperbolic three-manifolds $\scrN$ that bound geometrically some hyperbolic $\scrM$ with constant ratio: 
$$\frac{\Vol(\scrM)}{\Vol (\scrN)} =  \frac{2V_\scrZ}{V_\scrD} < 53.$$
\end{cor}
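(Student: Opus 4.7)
The corollary is essentially a direct consequence of Theorem~\ref{theorem}, so my plan is just to assemble the ingredients already provided and verify the numerical bound.

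First, I would apply Theorem~\ref{theorem} to extract the infinite family $\{\scrN_n, \scrM_n\}_{n\geqslant 1}$. For each $n$, the theorem gives $\Vol(\scrN_n) = 16 n V_\scrD$ and $\Vol(\scrM_n) = 32 n V_\scrZ$, so the ratio
\[
\frac{\Vol(\scrM_n)}{\Vol(\scrN_n)} = \frac{32 n V_\scrZ}{16 n V_\scrD} = \frac{2V_\scrZ}{V_\scrD}
\]
is independent of $n$, which is the constant ratio claimed.

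Next, I would verify the numerical bound $2V_\scrZ/V_\scrD<53$. Using $V_\scrZ = \tfrac{34}{3}\pi^2$ and the known approximation $V_\scrD \approx 4.3062\ldots$, one computes $2V_\scrZ/V_\scrD \approx 51.95$, which is indeed less than $53$. This is a routine numerical check; no subtle estimate is required.

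Finally, I must argue that the family $\{\scrN_n\}$ contains infinitely many pairwise non-isometric manifolds (otherwise the statement would be vacuous). This follows because the volumes $16 n V_\scrD$ are all distinct as $n$ varies over $\matN$, and volume is an isometry invariant of closed hyperbolic manifolds (in fact, by Mostow rigidity, a topological invariant in dimension $3$). Hence infinitely many pairwise distinct manifolds $\scrN_n$ arise, each bounding geometrically with the prescribed constant ratio. I do not anticipate any obstacle, as every non-trivial ingredient has already been established in Theorem~\ref{theorem}.
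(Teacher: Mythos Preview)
Your proposal is correct and matches the paper's approach: the corollary is stated immediately after Theorem~\ref{theorem} as a direct consequence, with no separate proof given, and your deduction of the constant ratio, the numerical check, and the observation that distinct volumes yield infinitely many non-isometric $\scrN_n$ are exactly what is implicitly intended.
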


The manifold $\scrM_1$ has volume $16V_\scrD \approx 68.8992$ and is to our knowledge the smallest closed hyperbolic 3-manifold known to bound geometrically.

\subsection*{Structure of the paper.} In Section \ref{section:2} we introduce right-angled polytopes as orbifolds, and a simple colouring technique from \cite{DJ} that produces manifold coverings of small degree. Then we show how this colouring technique passes easily from dimension $n$ to $n+1$ and conversely, and may be used to produce $n$-manifolds that bound geometrically when $n=3$. In Section \ref{section:3} we assemble dodecahedra and 120-cells to produce the manifolds $\scrN_k$ and $\scrM_k$ of Theorem \ref{theorem}.

\section{Colorings and covers of Coxeter orbifolds}\label{section:2}
A right-angled hyperbolic polytope $\scrP\subset\matH^n$ may be interpreted as an orbifold with mirror boundary, where the mirrors correspond to its facets. As such an orbifold, it has a plenty of manifold coverings. A few of them may be constructed by colouring appropriately the facets of $\scrP$ as shown in \cite{DJ, GS}. 

\subsection{Colourings and manifold covers.} 
Let $\scrP \subset \mathbb{H}^n$ be a convex compact right-angled polytope. Such objects exist only if $2\leqslant n \leqslant 4$, see \cite{PV}; the two important basic examples we consider here are the right-angled dodecahedron $\scrD \subset \matH^3$ and the right-angled 120-cell $\scrZ \subset\matH^4$. 

We consider $\scrP$ as an orbifold $\matH^n /_\Gamma$. The group $\Gamma$ is a right-angled Coxeter group that may be presented as
$$\Gamma = \langle\ r_F \ |\ r_F^2, [r_F, r_{F'}] \ \rangle$$
where $F$ varies over all the facets of $\scrP$ and the pair $F, F'$ varies over all the pairs of adjacent facets. The isometry $r_F\in\Iso (\matH^n)$ is a reflection in the hyperplane containing $F$.

A right-angled polyhedron $\scrP$ is \emph{simple} \cite[Theorem~1.8]{Vinberg}, which means that it looks combinatorially at every vertex $v$ like the origin of an orthant in $\matR^n$. In particular, $v$ is the intersection of exactly $n$ facets.

Let $V$ be a finite-dimensional vector space over $\matF_2$, thus isomorphic to $\matF_2^s$ for some $s$. A \emph{$V$-colouring} (or simply, a colouring) $\lambda$ is the assignment of a vector $\lambda_F \in V$ to each facet $F$ of $\scrP$ (called its \emph{colour}) such that the following holds: at every vertex $v$, the $n$ colours assigned to the $n$ adjacent facets around $v$ are linearly independent vectors in $V$.

A colouring induces a group homomorphism $\lambda\colon\Gamma \to V$, defined by sending $r_F$ to $\lambda_F$ for every facet $F$. Its kernel $\Gamma_\lambda = \ker \lambda$ 
is a subgroup of $\Gamma$ which determines an orbifold $M_\lambda = \matH^n/_{\Gamma_\lambda}$ covering $\scrP$.

\begin{prop} The orbifold $M_\lambda$ is a manifold.
\end{prop}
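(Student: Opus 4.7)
The plan is to show that $\Gamma_\lambda$ acts freely on $\matH^n$, so that the quotient $M_\lambda = \matH^n/_{\Gamma_\lambda}$ is a manifold.

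Since $\scrP = \matH^n/_\Gamma$ is a right-angled Coxeter orbifold with mirrored facets, the stabilizer in $\Gamma$ of a point $x \in \matH^n$ depends only on which faces of (a fundamental copy of) $\scrP$ contain its image. Concretely, if the image lies in the relative interior of a codimension-$k$ face $F = F_1 \cap \cdots \cap F_k$, intersection of $k$ facets, then the stabilizer is the finite right-angled reflection subgroup
$$
\Gamma_F \;=\; \langle r_{F_1}, \ldots, r_{F_k} \rangle \;\isom\; (\matZ/2\matZ)^k,
$$
and if $x$ lies in the interior of $\scrP$ then the stabilizer is trivial. Thus $\Gamma_\lambda$ acts freely on $\matH^n$ if and only if $\Gamma_\lambda \cap \Gamma_F = \{1\}$ for every face $F$ of $\scrP$, equivalently if and only if the restriction of $\lambda$ to $\Gamma_F$ is injective for every $F$.

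By the description above, $\lambda|_{\Gamma_F}$ is injective exactly when the vectors $\lambda_{F_1}, \ldots, \lambda_{F_k} \in V$ are linearly independent over $\matF_2$. So it suffices to check this linear independence at every face $F$ of $\scrP$. Here I would use that $\scrP$ is simple: every face $F$ of codimension $k$ contains some vertex $v$ of $\scrP$, and $v$ is the intersection of exactly $n$ facets of $\scrP$ which necessarily include $F_1, \ldots, F_k$. By the defining property of a $V$-colouring, the $n$ colours at $v$ are linearly independent in $V$, and hence so is any subset of them, in particular $\lambda_{F_1}, \ldots, \lambda_{F_k}$.

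The only real step is the first: understanding that the point-stabilizers in $\Gamma$ are precisely the finite subgroups $\Gamma_F$ generated by the reflections in the facets meeting $F$. This follows from the standard theory of (finite) Coxeter groups acting by reflections on $\matH^n$ together with the fact that $\scrP$ is simple and right-angled, so the stabilizer of any point is a direct product of $\matZ/2\matZ$ factors, one for each facet the point lies on. Once this is in place, the argument above shows that the vertex condition in the definition of a colouring automatically propagates to all lower-codimension faces, so $\Gamma_\lambda$ acts freely and $M_\lambda$ is a manifold.
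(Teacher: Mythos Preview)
Your proof is correct and follows essentially the same approach as the paper's: both show that $\Gamma_\lambda$ is torsion-free (equivalently, acts freely) by checking that the stabilizer of each face of $\scrP$---generated by the reflections in the facets containing it---injects into $V$ under $\lambda$. The paper compresses the argument by citing \cite[Theorem~12.3.4]{Davis} for the description of face stabilizers and leaves the injectivity implicit, whereas you spell out explicitly how simplicity of $\scrP$ reduces the linear-independence check at an arbitrary face to the vertex condition in the definition of a colouring.
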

\begin{proof}
We follow \cite[Lemma 1]{Vesnin87}. A torsion element in $\Gamma$ fixes some face $F$ of the tessellation of $\matH^n$ obtained by reflecting $\scrP$ in its own facets. Up to conjugacy, we can suppose that $F\subset\scrP$. The stabilizer of $F$ is generated by the reflections in the facets containing $F$ (see \cite[Theorem~12.3.4]{Davis}) and is hence mapped injectively into $V$ by $\lambda$. Thus, $\Gamma_\lambda$ is torsion-free and $M_\lambda$ is a manifold.
\end{proof}

At each vertex $v$ the colours $\lambda_F$ of the $n$ incident facets $F$ are independent: therefore the image of $\lambda$ has dimension at least $n$ and
the covering $M_\lambda \to \scrP$ has degree
$|\Gamma:\Gamma_\lambda| \geqslant 2^n$; if the equality holds the manifold $M_\lambda$ is called a \textit{small cover} of $\scrP$. The manifold coverings of $\scrP$ of smallest degree are precisely its small covers, see \cite[Proposition 2.1]{GS}.

We say that the colouring \emph{spans} $V$ if the vectors $\lambda_F$ span $V$ as $F$ varies, which is equivalent to the map $\lambda \colon \Gamma \to V$ being surjective. In that case the covering $M_\lambda \to \scrP$ has degree $|\Gamma : \Gamma_\lambda| = 2^{\, \dim V}$. 

\subsection{$k$-colourings}
Here, we give an example: recall that a \emph{$k$-colouring} of a polytope is the assignment of a colour from the set $\{1,\ldots, k\}$ to each facet so that two adjacent facets have distinct colours. A $k$-colouring for $\scrP$ produces an $\matF_2^k$-colouring that spans $\matF_2^k$: simply replace each colour $i\in\{1,\ldots, k\}$ with the element $e_i$ of the canonical basis for $\matF_2^k$.

\begin{example} \label{D}
The dodecahedron has precisely one four-colouring, up to symmetries. This induces an $\matF_2^3$-colouring (of lower dimension $3$ rather than $4$) on the hyperbolic right-angled dodecahedron $\scrD$ described in \cite{Vesnin87} and hence a manifold covering having degree $2^3=8$.
\end{example}

\begin{example} \label{Z}
The 120-cell has a five-colouring (in fact, ten five-colourings up to  symmetries \cite{F}). Each produces a manifold covering of the hyperbolic right-angled 120-cell $\mathscr{Z}$ of degree $2^5=32$. 
\end{example}

\subsection{Orientable coverings}
The following lemma gives an orientability criterion analogous to that for small covers in \cite{NN} or L\"obell manifolds in \cite[Lemma 2]{Vesnin87}. Let $\lambda$ be a $V$-colouring of a right-angled polytope $\scrP$.

\begin{lemma}\label{lambda-orientable:lemma}
Suppose $\lambda$ spans $V$. The manifold $M_\lambda$ is orientable if and only if, for some isomorphism $V\isom \matF_2^s$, each colour $\lambda_F$ has an odd number of $1$'s.
\end{lemma}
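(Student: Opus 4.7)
The plan is to recast orientability as a statement about the \emph{sign character} of $\Gamma$, and then observe that this character factors through $\lambda$ exactly when each colour has odd weight in some basis.

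First, since every reflection $r_F \in \Gamma$ reverses orientation, the rule $r_F \mapsto 1$ defines a well-defined homomorphism $\varepsilon \colon \Gamma \to \matF_2$ (the Coxeter relations $r_F^2 = 1$ and $[r_F,r_{F'}] = 1$ are automatically respected in the abelian target), which agrees with the orientation character $\Gamma \to \Iso(\matH^n) \to \{\pm 1\} \cong \matF_2$. Hence $M_\lambda = \matH^n / \Gamma_\lambda$ is orientable if and only if $\Gamma_\lambda \subseteq \ker \varepsilon$.

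Next, because $\lambda$ spans $V$, the homomorphism $\lambda \colon \Gamma \to V$ is surjective, so $\Gamma_\lambda \subseteq \ker \varepsilon$ is equivalent to the existence of an $\matF_2$-linear map $\mu \colon V \to \matF_2$ with $\varepsilon = \mu \circ \lambda$. Evaluating on the generators, this condition is precisely $\mu(\lambda_F) = 1$ for every facet $F$ of $\scrP$.

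It then remains a small linear-algebra exercise to see that such a $\mu$ exists if and only if there is an isomorphism $V \isom \matF_2^s$ in which each $\lambda_F$ has an odd number of $1$'s. One direction is immediate: given such a basis, the sum-of-coordinates functional $(x_1,\dots,x_s) \mapsto x_1 + \cdots + x_s$ does the job. For the other direction, given $\mu$ (assuming $V \neq 0$, otherwise the claim is vacuous), pick a basis $f_1,\dots,f_{s-1}$ of $\ker\mu$ and some $f_s$ with $\mu(f_s)=1$; then $e_i := f_i + f_s$ for $i < s$ and $e_s := f_s$ form a basis with $\mu(e_i) = 1$ for all $i$, so in this basis $\mu$ becomes the sum functional and the condition $\mu(\lambda_F)=1$ translates into odd weight. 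The only mildly subtle step is this basis change, but it is routine; the conceptual content is entirely concentrated in the identification of orientability with the factorisation $\varepsilon = \mu \circ \lambda$.
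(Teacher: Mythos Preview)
Your proof is correct and follows essentially the same approach as the paper: both identify orientability with the inclusion $\Gamma_\lambda \subseteq \ker(\text{sign character})$, rephrase this as the sign character factoring through $\lambda$, and then reduce to a linear-algebra statement about bases. The only minor difference is in the converse of the linear-algebra step---the paper argues by contradiction using a basis extracted from the $\lambda_F$'s, whereas you construct the odd-weight basis directly from $\mu$---but this is a cosmetic variation, not a different route.
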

\begin{proof}
Let $\Gamma^+\triangleleft \Gamma$ be the index two subgroup consisting of orientation-preserving isometries. Then $\Gamma^+$ is the kernel of the homomorphism $\phi\colon\Gamma \to \matF_2$ that sends $r_F$ to $1$ for every facet $F$.

The manifold $M_\lambda$ is orientable if and only if $\Gamma_\lambda$ is contained in $\Gamma^+$, and this in turn holds if and only if there is a homomorphism $\chi: V \rightarrow \matF_2$ such that $\phi = \chi \circ \lambda$. The latter is equivalent to the existence of an isomorphism $V\isom \matF_2^s$ that transforms $\lambda_F$ into a vector with an odd number of $1$'s for each $F$. Indeed, if such an isomorphism exists, then $\chi$ can be taken to be the sum of the coordinates of a vector. 

Conversely, suppose such an isomorphism does not exist. Since the vectors $\lambda_F$ span $V$ we may take some of them as a basis for $V$ and write them as $e_1 = (1,0,0,\dots,0)$, $e_2 = (0,1,0,\dots,0)$, $\dots$, $e_s = (0,\dots,0,0,1)$. By hypothesis, there exists a facet $F$ such that $\lambda_F$ has an even number of $1$'s. Up to reordering, we may write $\lambda_F = \sum^{2k}_{i=1} e_i$ for some $k$. Now we can see that the homomorphism $\chi$ does not exist, since its existence would imply $1 = \phi(r_F) = \sum^{2k}_{i=1} \phi(e_i) = \sum^{2k}_{i=1} 1 = 0$. 
\end{proof}

\begin{cor}\label{lambda-orientable:corollary}
Let there be facets $F$, $F^\prime$ and $F^{\prime\prime}$ of $\scrP$ such that $\lambda_{F} + \lambda_{F^{\prime}} + \lambda_{F^{\prime\prime}} = \mathbf{0}$. Then $\lambda$ is a non-orientable colouring.
\end{cor}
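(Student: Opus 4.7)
The plan is to derive a contradiction from Lemma~\ref{lambda-orientable:lemma} (or rather its proof). First, we may assume without loss of generality that $\lambda$ spans $V$: if not, replace $V$ by the subspace spanned by the colours $\{\lambda_F\}$. This substitution does not affect the homomorphism $\lambda \colon \Gamma \to V$ (only its codomain), hence does not affect $\Gamma_\lambda = \ker \lambda$, nor the manifold $M_\lambda$, nor its orientability. The hypothesis $\lambda_F + \lambda_{F'} + \lambda_{F''} = \mathbf{0}$ is also preserved.

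Now suppose, for contradiction, that $M_\lambda$ is orientable. The proof of Lemma~\ref{lambda-orientable:lemma} shows that this is equivalent to the existence of a homomorphism $\chi \colon V \to \matF_2$ such that $\phi = \chi \circ \lambda$, where $\phi \colon \Gamma \to \matF_2$ is the orientation homomorphism sending each generator $r_F$ to $1$. In particular, $\chi(\lambda_F) = \phi(r_F) = 1$ for every facet $F$ of $\scrP$.

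Applying $\chi$ to the given relation yields
\[
0 = \chi(\mathbf{0}) = \chi(\lambda_F + \lambda_{F'} + \lambda_{F''}) = \chi(\lambda_F) + \chi(\lambda_{F'}) + \chi(\lambda_{F''}) = 1 + 1 + 1 = 1
\]
in $\matF_2$, a contradiction. Hence $M_\lambda$ is non-orientable, as required.

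There is no genuine obstacle here; the only subtlety worth flagging is the initial reduction to the spanning case, which is needed to invoke the orientability characterization of Lemma~\ref{lambda-orientable:lemma} as stated. Everything else is a one-line application of linearity of $\chi$.
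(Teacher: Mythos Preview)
Your proof is correct and follows essentially the same approach as the paper: both argue by contradiction, applying a linear functional that takes the value $1$ on every colour (your $\chi$, the paper's coordinate-sum $\epsilon$) to the relation $\lambda_F + \lambda_{F'} + \lambda_{F''} = \mathbf{0}$ to obtain $0 = 1$. Your preliminary reduction to the spanning case is a small but legitimate addition, since Lemma~\ref{lambda-orientable:lemma} as stated assumes this; the paper's proof tacitly takes it for granted.
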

\begin{proof}
For a vector $v = (v_1, v_2, \dots, v_s) \in \matF_2^s$ let $\epsilon(v) := \sum^s_{i=1} v_i$. Suppose that $\lambda$ is orientable, so there exists an isomorphism $V \cong \matF^s_2$, such that each $\lambda_{F}$ has an odd number of $1$'s. Then we arrive at a contradiction, since $0 = \epsilon(\mathbf{0}) = \epsilon(\lambda_{F} + \lambda_{F^\prime} + \lambda_{F^{\prime\prime}}) = \epsilon(\lambda_F) + \epsilon(\lambda_{F^\prime}) + \epsilon(\lambda_{F^{\prime\prime}}) = 1$. 
\end{proof}

\begin{example} The manifolds in Examples \ref{D} and \ref{Z} are orientable.
\end{example}

\begin{example} Consider the $25$ small covers of the hyperbolic right-angled dodecahedron $\mathscr{D} $ found by A.~Garrison and R.~Scott in \cite{GS}. The list is complete, up to isometries between the corresponding manifolds. Using the orientability criterion one sees immediately from \cite[Table~1]{GS} that $24$ of them are non-orientable and exactly $1$ is orientable and corresponds to Example \ref{D}.
Another example carried out in \cite{GS} is a small cover of the hyperbolic right-angled $120$-cell $\mathscr{Z}$. This cover is again non-orientable. There is no classification of small covers of $\mathscr{Z}$ known at present.
\end{example}

\subsection{Induced colouring}
A facet $F$ of a $n$-dimensional right-angled polytope $\scrP\subset\matH^n$ is a $(n-1)$-dimensional right-angled polytope. A $V$-colouring $\lambda$ of $\scrP$ induces a $W$-colouring $\mu$ of $F$ with $W = V/_{\langle \lambda_F \rangle}$: simply assign to every face of $F$ the colour of the facet of $\scrP$ that is incident to it. The following lemma generalises \cite[Proposition~2.3]{GS}. 

\begin{lemma}\label{colouring-geodesic:lemma}
The manifold $M_\mu$ is contained in $M_\lambda$ as a totally geodesic sub-manifold, so that the cover $M_\lambda \to \scrP$ restricts to the cover $M_\mu \to F$.
\end{lemma}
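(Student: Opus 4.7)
The plan is to identify $M_\mu$ explicitly with the image of the hyperplane $H \subset \matH^n$ containing $F$ inside $M_\lambda$. First I would describe the setwise stabilizer $\mathrm{Stab}_H(\Gamma)$. Right-angledness forces every reflection $r_{F'}$ in a facet $F'$ adjacent to $F$ to preserve $H$, so the subgroup $\tilde\Gamma_F \leq \Gamma$ generated by such reflections acts on $H$; the restriction homomorphism $\tilde\Gamma_F \to \mathrm{Isom}(H)$ identifies $\tilde\Gamma_F$ with the Coxeter group $\Gamma_F$ of the $(n-1)$-polytope $F$, with quotient $F$. Since $r_F$ commutes with each such $r_{F'}$ and fixes $H$ pointwise, and since the parabolic stabilizer of the facet $F$ in $\Gamma$ is just $\langle r_F\rangle$, one obtains $\mathrm{Stab}_H(\Gamma) = \tilde\Gamma_F \times \langle r_F\rangle$, with action on $H$ factoring through $\tilde\Gamma_F$.

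By construction of the induced colouring, $\mu = \pi \circ \lambda|_{\tilde\Gamma_F}$ where $\pi \colon V \to W = V/\langle \lambda_F\rangle$ is the quotient. Writing any $g \in \mathrm{Stab}_H(\Gamma)$ uniquely as $g = g'\, r_F^\epsilon$ with $g' \in \tilde\Gamma_F$ and $\epsilon \in \{0,1\}$, one has $g \in \Gamma_\lambda$ iff $\lambda(g') = \epsilon \lambda_F$ iff $\mu(g') = 0$. As $r_F$ acts trivially on $H$, the image in $\mathrm{Isom}(H)$ of $\mathrm{Stab}_H(\Gamma_\lambda) = \Gamma_\lambda \cap \mathrm{Stab}_H(\Gamma)$ is exactly $\ker\mu$; hence the restriction of the covering $\matH^n \to M_\lambda$ to $H$ factors as a local isometry through $H/\ker\mu = M_\mu$, and its further composition with $M_\lambda \to \scrP$ is the cover $M_\mu \to F$ induced by $\mu$.

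The remaining and main point is injectivity of the map $M_\mu \to M_\lambda$: equivalently, distinct hyperplanes in the orbit $\Gamma_\lambda \cdot H$ must be disjoint inside $\matH^n$. The key observation is that every hyperplane $g(H)$ in the $\Gamma$-orbit of $H$ carries the same ``colour'' $\lambda_F$, since over $\matF_2$ one has $\lambda(g r_F g^{-1}) = \lambda_F$. If two distinct hyperplanes in this orbit intersected, they would meet along a codimension-$\geq 2$ face of the tessellation; but at such a face the two meeting hyperplanes of the tessellation must have linearly independent $\lambda$-colours (the face in question lies in the closure of a tessellation vertex, at which the defining condition for a colouring is imposed), contradicting the equality of both colours to $\lambda_F$. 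Hence $M_\mu \hookrightarrow M_\lambda$ is an isometric embedding of a totally geodesic submanifold, compatible with the two covering maps as required.
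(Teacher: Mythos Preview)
Your proof is correct and follows essentially the same approach as the paper: both compute the setwise stabilizer of $H$ in $\Gamma$ (you via the splitting $\tilde\Gamma_F \times \langle r_F\rangle$, the paper via the short exact sequence $0 \to \langle r_F\rangle \to \Gamma\cap\mathrm{Stab}(H) \to \Gamma_F \to 0$) and show that its intersection with $\Gamma_\lambda$ acts on $H$ exactly as $\ker\mu$. You go a step further by explicitly verifying injectivity of $M_\mu \to M_\lambda$ through the observation that every $\Gamma$-translate of $H$ carries the same colour $\lambda_F$, so two such hyperplanes cannot meet along a codimension-two face of the tessellation; the paper leaves this embeddedness implicit in the phrase ``naturally contained''.
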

\begin{proof}
Let $\Gamma$ be the Coxeter group of $\scrP$ and $\matH^{n-1}\subset \matH^n$ be the hyperplane containing $F$. We regard $F$ as the orbifold $\matH^{n-1}/_{\Gamma_F}$ where $\Gamma_F$ is the Coxeter group of $F$. The following natural diagram commutes:
$$
\xymatrix{ 
0 \ar@{->}[r] & \langle r_F \rangle \ar@{->}[r] & \Gamma\cap {\rm Stab}(\matH^{n-1}) \ar@{->}^{\qquad f}[r] \ar@{->}_\lambda[d] & \Gamma_F \ar@{->}[r] \ar@{->}^\mu[d] & 0 \\
 & & V \ar@{->}_\pi[r] & V /_{\langle \lambda_F \rangle}&
 }
$$
The first line is an exact sequence. We deduce easily that $f$ restricts to an isomorphism 
$$f\colon \ker \lambda \cap {\rm Stab}(\matH^{n-1}) \longrightarrow \ker \mu.$$
Hence $M_\mu = \matH^{n-1}/_{\Gamma_\lambda \cap {\rm Stab}(\matH^{n-1})}$ is naturally contained in $M_\lambda = \matH^n/_{\Gamma_\lambda}$.
\end{proof}

The pre-image of $F$ in $M_\lambda$ with respect to the regular covering $M_\lambda \to \scrP$ consists of possibly several copies of $M_\mu$.

\subsection{Extended colouring}
Conversely, we can also extend a colouring from a facet to the whole polytope. 
We say that two colourings $\lambda$ and $\lambda'$ on $\scrP$ are \emph{equivalent} if they have isomorphic kernels $\Gamma_\lambda \cong \Gamma_{\lambda'}$ (cf. the definition before \cite[Proposition~2.4]{GS}). 

\begin{prop} \label{extend:prop}
Let $F$ be a facet of a compact right-angled polytope $\scrP \subset \mathbb{H}^n$. Every colouring of $F$ is equivalent to one induced by an orientable colouring of $\scrP$.
\end{prop}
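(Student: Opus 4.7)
The plan is to define an orientable extension $\lambda$ of $\mu$ using one auxiliary ``parity'' direction $\alpha$ together with one new basis vector per non-adjacent facet. Label the facets of $\scrP$ adjacent to $F$ as $F_1,\ldots,F_p$, write $E_i=F\cap F_i$, and label the remaining facets of $\scrP$ distinct from $F$ as $F_{p+1},\ldots,F_m$. I would take $V=W\oplus\matF_2\alpha\oplus\matF_2^{m-p-1}$ with basis $\{\alpha,e_{p+1},\ldots,e_m\}$ on the last two summands and set
\[
\lambda_F=\alpha,\qquad \lambda_{F_i}=\mu_{E_i}+\alpha\ (1\leq i\leq p),\qquad \lambda_{F_j}=e_j+\alpha\ (p+1\leq j\leq m).
\]

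Orientability and equivalence with $\mu$ would then be immediate. The functional $\chi\colon V\to\matF_2$ that vanishes on $W$ and on each $e_j$ and sends $\alpha$ to $1$ satisfies $\chi(\lambda_G)=1$ for every facet $G$ of $\scrP$, so Lemma~\ref{lambda-orientable:lemma} yields orientability. The quotient $\pi\colon V\twoheadrightarrow V/\langle\alpha\rangle\cong W\oplus\matF_2^{m-p-1}$ sends $\lambda_{F_i}$ to $\mu_{E_i}\in W$, so the induced colouring of $F$ factors as $\mu$ composed with the canonical injection $W\hookrightarrow V/\langle\alpha\rangle$ and therefore has the same kernel as $\mu$.

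The substantive step is verifying validity at each vertex $v$ of $\scrP$. The case $v\in F$ is routine: the $\alpha$-component separates $\lambda_F$ from the rest and the $\mu$-values are independent at $v\in F$ by validity of $\mu$. For $v\notin F$, let $F_{i_1},\ldots,F_{i_k}$ be the adjacent-to-$F$ facets through $v$ and $F_{l_1},\ldots,F_{l_{n-k}}$ the remaining facets through $v$; a relation $\sum_j c_j\lambda_{F_{i_j}}+\sum_s d_s\lambda_{F_{l_s}}=0$ decomposes into the three component equations $d_s=0$, $\sum_j c_j\mu_{E_{i_j}}=0$, and $\sum_j c_j=0$. To conclude $c_j=0$ I would invoke the classical flag property of the nerve of a right-angled Coxeter polytope (the main obstacle I would cite rather than reprove): every pairwise-adjacent family $S$ of facets of $\scrP$ has non-empty common intersection, because the reflections $\{r_F:F\in S\}$ generate a finite elementary abelian parabolic subgroup $(\matZ_2)^{|S|}$ of $\Gamma$ whose stabilised face is $\cap_{F\in S}F$. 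Applied to $\{F,F_{i_1},\ldots,F_{i_k}\}$, which is pairwise adjacent since each $F_{i_j}$ is adjacent to $F$ and the $F_{i_j}$'s meet at $v$ in the simple polytope $\scrP$, and where $k\leq n-1$ as the subgroup must embed into $\Iso(\matH^n)$, this produces a vertex $u\in F\cap F_{i_1}\cap\cdots\cap F_{i_k}$; at $u$ the facets $E_{i_j}$ of $F$ are all incident, so by validity of $\mu$ at $u$ the colours $\mu_{E_{i_j}}$ are among the $n-1$ linearly independent colours at $u$ and are themselves independent, forcing $c_j=0$.
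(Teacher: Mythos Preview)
Your proof is correct and follows essentially the same construction as the paper's: one parity coordinate plus one fresh basis vector per facet not adjacent to $F$, with the formulas differing only cosmetically (you add $\alpha$ to every colour, while the paper uses $(\epsilon(\lambda_{G\cap F})+1,\lambda_{G\cap F},\mathbf 0)$ for adjacent $G$ and $(\mathbf 0,\mathbf 0,e_i)$ for non-adjacent $G_i$). Note the harmless index slip $\matF_2^{m-p-1}\to\matF_2^{m-p}$ to match your basis $\{e_{p+1},\ldots,e_m\}$; your explicit appeal to the flag property at vertices $v\notin F$ in fact supplies the justification that the paper's single sentence ``the linear independence condition is satisfied at each vertex'' leaves implicit.
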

\begin{proof}
Let $\lambda$ be a $V$-colouring of the facet $F$. Fix an isomorphism $V \isom \matF_2^s$. Define $W= \matF_2 \oplus \matF_2^s \oplus \matF_2^f$ where $f$ is the number of facets of $\scrP$ that are not adjacent to $F$. We define a $W$-colouring $\mu$ of $\scrP$ as follows:
\begin{itemize}
\item set $\mu_F = (1, \mathbf{0}, \mathbf{0})$;
\item set $\mu_G = (\epsilon (\lambda_{G\cap F})+1, \lambda_{G\cap F}, \mathbf{0})$ where $\epsilon(v) = \sum^{s}_{i=1} v_i$, for every facet $G$ adjacent to $F$;
\item set $\mu_{G_i} = (\mathbf{0}, \mathbf{0}, e_i)$ for the remaining facets $G_1,\ldots G_f$. 
\end{itemize}

Indeed, the map $\mu$ is a colouring: the linear independence condition is satisfied at each vertex. Moreover, each vector $\mu_F, \mu_G, \mu_{G_i}$ has an odd number of $1$'s.  Finally, by construction $\mu|_F$ is equivalent to $\lambda$. 
\end{proof}

We call the colouring $\mu$ an \textit{extension} of $\lambda$. 

\subsection{A more efficient extension}
Proposition \ref{extend:prop} shows how to extend a $V$-colouring of a facet $F$ to an orientable $W$-colouring of the polytope $\scrP$. The proof shows that the dimension of $W$ can grow considerably during the process, since $\dim W = \dim V + 1 + f$ where $f$ is the number of facets of $\scrP$ not adjacent to $F$.

We may use Proposition \ref{extend:prop} with $\scrP$ being the right-angled 120-cell and $F$ its dodecahedral facet. However, in this case we could find examples where both $V$ and $W$ have smaller dimension via computer. The following was proved by using ``Mathematica''.

\begin{prop} \label{5:prop}
Each of the $24$ non-orientable $\matF^3_2$-colourings of $\scrD$ from \cite[Table~1]{GS} is equivalent to one induced by an orientable $\matF^5_2$-colouring of $\scrZ$.
\end{prop}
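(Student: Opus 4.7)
Since the claim is explicitly stated as a computer verification, my proof plan is computational. For each of the $24$ non-orientable $\matF_2^3$-colourings $\lambda$ of $\scrD$ listed in Table~1 of Garrison--Scott, I must exhibit an orientable $\matF_2^5$-colouring $\mu$ of the $120$-cell $\scrZ$ that restricts on a chosen dodecahedral facet $F$ to a colouring equivalent to $\lambda$. Note that Proposition~\ref{extend:prop} already guarantees an extension to some $W$-colouring, but with $\dim W$ potentially very large; the content of the proposition is the sharp bound $\dim W = 5$ together with orientability.

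The first step is to encode the combinatorial data: the $120$ dodecahedral facets of $\scrZ$, the $600$ vertices together with the four facets meeting at each, and, for the distinguished facet $F$, the $12$ facets of $\scrZ$ meeting $F$ along its pentagonal faces. I would fix $\mu_F = e_1 \in \matF_2^5$ and choose a splitting $\matF_2^5 \isom \langle e_1 \rangle \oplus \matF_2^4$ together with an embedding $\matF_2^3 \hookrightarrow \matF_2^4$. For each facet $G$ adjacent to $F$, the image of $\mu_G$ in $\matF_2^5/\langle e_1 \rangle$ is then forced to equal $\lambda_{G \cap F}$, so $\mu_G$ has two candidate lifts and we retain only the one with odd weight, as required by Lemma~\ref{lambda-orientable:lemma}.

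With the values of $\mu$ on $F$ and its $12$ neighbours determined, I would extend $\mu$ to the remaining $107$ facets by backtracking, processing facets in order of increasing combinatorial distance from $F$ and enforcing at each step the linear independence of the four vectors at every completed vertex together with the odd-weight condition on each $\mu_G$. Once a full assignment is produced, I would verify that $\mu$ spans $\matF_2^5$ and that the induced colouring on $F$ has kernel equal to $\ker \lambda$ in the Coxeter group $\Gamma_F$ of the dodecahedron, which is precisely the definition of equivalence.

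The main obstacle is the size of the search: a naive assignment space has roughly $2^{5 \cdot 107}$ branches, so efficient constraint propagation is essential, and this is what the Mathematica implementation handles. A useful refinement, though not logically necessary, is to exploit the symmetry group of $\scrZ$ acting on colourings to reduce the $24$ cases to orbit representatives; with this or with aggressive pruning at the linear-independence constraints, a direct backtracking search on a modern computer suffices to produce one valid orientable $\matF_2^5$-extension for each of the $24$ non-orientable colourings of $\scrD$.
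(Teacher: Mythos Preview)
Your proposal is correct and follows essentially the same approach as the paper: fix the colour of the distinguished dodecahedral facet, determine the colours of its twelve neighbours by lifting the given $\matF_2^3$-colours to odd-weight vectors in $\matF_2^5$, and then complete the remaining facets by a computer search enforcing linear independence at each vertex and odd weight on every facet. The paper's proof differs only in cosmetic choices (it uses $\lambda_{P_0}=(0,0,0,0,1)$ rather than $e_1$, and the specific embedding $(v_1,v_2,v_3)\mapsto(v_1,v_2,v_3,0,\epsilon(v)+1)$) and in delegating the search to a cited Mathematica notebook rather than describing the backtracking strategy.
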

\begin{proof}
The Mathematica program code given in \cite{Tschantz} takes a non-orientable $\matF_2^3$-colouring of $\scrD$ and produces an orientable $\matF_2^5$-colouring of $\scrZ$, as required.

Each vector $v = (v_1, v_2, \dots, v_s) \in \matF^s_2$ is encoded by the binary number $n_v = v_1\cdot 2^0 + v_2\cdot 2^1 + \dots + v_s\cdot 2^{s-1}$. Let $P_0 := \scrD$, be the right-angled dodecahedron. We enumerate its faces exactly as shown in \cite[Figure~3]{GS} and the corresponding 12-tuple of numbers encodes its colouring. Let $P_i$, $i=1,\dots,12$ be the dodecahedral facets incident to $P_0$ at the respective faces $F_i$, $i=1,\dots,12$. We start extending the colouring of $P_0 := \scrD$ as follows:
\begin{itemize}
\item set $\lambda_{P_0} = (0, 0, 0, 0, 1)$;
\item if $\mu_{F_i} = v = (v_1, v_2, v_3)$, then $\lambda_{P_i} := (v_1, v_2, v_3, 0, \epsilon(v)+1)$.
\end{itemize} 

We obtain a $13$-tuple, which is the initial segment of the colouring of $\scrZ$. Then the Mathematica code \cite{Tschantz} attempts to produce an orientable $120$-tuple, which encodes the entire colouring.
\end{proof}

We were not able to find any orientable $\matF^4_2$-colouring of $\scrZ$ extending a non-orientable $\matF^3_2$-colouring of $\scrD$: our examples are not small covers. 

Let now $M_\lambda$ be the manifold obtained by one $\matF^5_2$-colouring $\lambda$ of $\scrZ$: it covers $\scrZ$ with degree $2^5 = 32$. The colouring $\lambda$ restricts to a non-orientable colouring $\mu$ of the facet $\scrD$, which gives rise, by Lemma \ref{colouring-geodesic:lemma}, to a non-orientable codimension-1 geodesic submanifold $M_\mu \subset M_\lambda$, covering $\scrD$ with degree $2^3 = 8$. 

In complete analogy to \cite[Lemma~3.2]{LR3}, by cutting $M_\lambda$ along $M_\mu$ we get a compact orientable hyperbolic manifold tessellated by $32$ right-angled 120-cells having a geodesic boundary isometric to a connected manifold $\widetilde {M_\mu}$ that double-covers $M_\mu$ and is hence tessellated by $2\cdot 2^3 = 16$ right-angled dodecahedra.

To prove Theorem \ref{theorem} it only remains to extend this argument from one 120-cell to an appropriate assembling of $n \geq 2$ distinct 120-cells.

\section{Assembling right-angled dodecahedra and 120-cells} \label{section:3}
Here, we assemble right-angled dodecahedra and 120-cells in order to construct more complicated convex compact right-angled convex compact polytopes.

\subsection{Connected sum of polytopes.} Let $\scrP_1$ and $\scrP_2$ be two right-angled polytopes in $\mathbb{H}^n$. If there is an isometry between two facets of $\scrP_1$ and $\scrP_2$, we may use it to glue them: the result is a new right-angled polytope in $\matH^n$ which we call a \textit{connected sum} of $\scrP_1$ and $\scrP_2$ along these facets.  

\subsection{Assembling}
An \emph{assembling} of right-angled dodecahedra (or 120-cells) is a right-angled polytope constructed from a finite sequence
$$\scrP_1 \# \scrP_2 \# \scrP_3 \# \ldots \# \scrP_k$$
of connected sums performed from the left to the right, where each $\scrP_i$ is a right-angled dodecahedron (or 120-cell).

\begin{lemma} \label{facet:lemma}
An assembling of $k$ right-angled dodecahedra is a facet of an assembling of $k$ right-angled 120-cells.
\end{lemma}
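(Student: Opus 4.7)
The plan is to prove the lemma by induction on $k$, with a slightly strengthened inductive statement: there is an assembling $\scrQ_1 \# \cdots \# \scrQ_k$ of $k$ right-angled $120$-cells in which the assembling $\scrP_1 \# \cdots \# \scrP_k$ is realised as a facet, with each $\scrP_i$ sitting as a specific dodecahedral facet of the corresponding $\scrQ_i$. The base case $k=1$ is immediate, since $\scrD$ is a dodecahedral facet of $\scrZ$.

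For the inductive step, I will write the final connected sum as the gluing of $\scrP_k$ to some $\scrP_j$ ($j < k$) along a pentagonal face $G$. By the inductive hypothesis there is an assembling $\scrQ' = \scrQ_1 \# \cdots \# \scrQ_{k-1}$ containing $\scrP_1 \# \cdots \# \scrP_{k-1}$ as a facet, with $\scrP_j$ sitting as a dodecahedral facet of $\scrQ_j$. Inside $\scrQ_j$ the pentagon $G$ is a ridge shared between $\scrP_j$ and a unique further dodecahedral facet $D_j$, which lies in the hyperplane $H_G \subset \matH^4$ perpendicular to the hyperplane of $\scrP_j$ along $G$. The first thing I need to show is that $D_j$ is a free boundary facet of $\scrQ'$. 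This should follow by induction on the construction: each new $120$-cell is glued in along the lateral dodecahedron corresponding to the pentagonal gluing face being used for its dodecahedral attachment, and since $G$ has not yet been used by any dodecahedral gluing in $\scrP_1 \# \cdots \# \scrP_{k-1}$, the dodecahedron $D_j$ has not been used by any previous $120$-cell gluing either.

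Next, I will take a fresh copy $\scrQ_k$ of $\scrZ$ and pick two adjacent dodecahedral facets $D_k, \scrP_k' \subset \scrQ_k$ meeting along a pentagonal ridge $G_k$. Using the transitivity of the isometry group of $\scrZ$ on ordered configurations (facet, adjacent facet, shared ridge), I will pick an identification $\phi\colon D_k \to D_j$ that carries $G_k$ to $G$ and places $\scrP_k'$ in the position required by the original dodecahedral connected sum along $G$. Gluing $\scrQ_k$ to $\scrQ'$ via $\phi$ produces an assembling of $k$ right-angled $120$-cells. After the gluing $D_j$ becomes internal and $\scrP_j$, $\scrP_k'$ meet across the now-internal ridge $G$; since both $\scrP_j$ and $\scrP_k'$ are perpendicular to $D_j$ along $G$, they lie in the same three-dimensional hyperplane and merge into a single facet $\scrP_j \# \scrP_k$. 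Together with the other $\scrP_i$'s this displays $\scrP_1 \# \cdots \# \scrP_k$ as a facet of $\scrQ_1 \# \cdots \# \scrQ_k$, completing the induction.

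The main obstacle I expect is keeping the combinatorics of the assembling under control at each inductive step: I must rule out $D_j$ having been merged with another coplanar dodecahedral facet inherited from a previously glued $120$-cell. Such accidental coplanarity arises precisely when two pentagonal faces in $\scrP_1 \# \cdots \# \scrP_{k-1}$ happen to lie in a common $2$-plane of $\matH^3$, forcing the corresponding lateral dodecahedra in $\matH^4$ into the common hyperplane $H_G$. The expectation is that two such lateral dodecahedra in $H_G$ share at most an edge rather than a pentagonal face, so they remain distinct facets of $\scrQ'$, and $D_j$ is still available as a single facet for the inductive gluing; verifying this rigorously, using the specific geometry of the right-angled dodecahedron sitting in $\scrZ$, is the technical heart of the argument.
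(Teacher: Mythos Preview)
Your inductive approach is essentially the paper's argument spelled out in detail. The paper's proof is three sentences: fix $\matH^3\subset\matH^4$, view $\scrD$ as a facet of $\scrZ$, and each time a new $\scrD_i$ is attached in $\matH^3$, attach the copy $\scrZ_i$ of $\scrZ$ sitting over it in $\matH^4$. Your induction, with $\scrP_i$ as the bottom facet of $\scrQ_i$, is exactly this.

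The obstacle you single out at the end is over-thought, and your proposed resolution is not the right one. Two lateral dodecahedra $D_j,D'\subset H_G$ over coplanar pentagons $G,G'$ \emph{can} share a pentagonal face: this happens precisely when $G$ and $G'$ share an edge, because in the right-angled dodecahedral tessellation of $H_G$ the dodecahedra over adjacent pentagons in $\Pi_G$ are adjacent. So your ``expectation'' that they share at most an edge is false in that situation. The correct (and much simpler) observation is that this situation never arises for the pentagon $G$ you care about: by the very definition of a connected sum, the attaching face $G$ is a \emph{full} pentagonal facet of $\scrP_1\#\cdots\#\scrP_{k-1}$, hence has not merged with any coplanar neighbour, hence the lateral dodecahedron $D_j$ over $G$ has not merged with any coplanar neighbour either and is a genuine dodecahedral facet of $\scrQ'$. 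This is also why $D_j$ is free: the $120$-cell on the other side of $D_j$ has bottom facet the dodecahedron across $G$ from $\scrD_j$, which is not among $\scrD_1,\ldots,\scrD_{k-1}$ since $G$ lies on the boundary of their assembling. With this remark your induction goes through without any further geometric analysis.
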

\begin{proof}
Consider $\matH^3$ inside $\matH^4$ as a geodesic hyperplane. Consider $\scrD\subset \matH^3$ as a facet of $\scrZ \subset \matH^4$. Every time we attach a new copy $\scrD_i$ of $\scrD$ in $\matH^3$, we correspondingly attach a new copy $\scrZ_i$ of $\scrZ$ having $\scrD_i$ as a facet.
\end{proof}

\subsection{Proof of Theorem \ref{theorem}.} We have described all the ingredients necessary to prove Theorem \ref{theorem}.

By Proposition \ref{5:prop}, pick an orientable $\matF^5_2$-colouring of the right-angled 120-cell $\scrZ$ that induces a non-orientable $\matF^3_2$-colouring of one dodecahedral facet $\scrD$.

Then assemble $n$ copies of $\scrD$ as
$$\scrP = \scrD_1 \# \scrD_2 \# \ldots \# \scrD_n$$
and consider, as in Lemma \ref{facet:lemma}, the resulting right-angled polyhedron $\scrP$ as a facet of a a right-angled polytope $\scrQ$ made of $n$ copies of the right-angled 120-cell, each having a $\scrD_i$ as a facet.

Every time we assemble a new copy of $\scrD_i$, we give $\scrD_i$ the colouring of the adjacent dodecahedron, mirrored along the glued pentagonal face, and we do the same with each corresponding new 120-cell $\scrZ_i$. The resulting polytope $\scrQ$ inherits an orientable $\matF_2^5$-colouring $\lambda$ that induces a non-orientable $\matF_2^3$-colouring $\mu$ of $\scrP$, if $\scrP$ is assembled appropriately. Indeed, each of the 24 non-orientable colourings of $\scrD$ has three faces $F$, $F^{\prime}$ and $F^{\prime\prime}$ satisfying the conditions of Corollary~\ref{lambda-orientable:corollary}. Moreover, we can find a fourth face $F^\ast$ which is disjoint from each of them. These properties are easily verifiable by using \cite[Table~1]{GS}. Then, we start assembling $\scrD_i$'s by forming a connected sum along $F^\ast$. Then the colouring of the resulting polytope $\scrP$ again satisfies Corollary~\ref{lambda-orientable:corollary} and hence is non-orientable, as required. 

By cutting $M_\lambda$ along $M_\mu$ we get a compact orientable hyperbolic manifold tessellated by $32n$ right-angled 120-cells having a geodesic boundary that is isometric to a connected manifold $\widetilde {M_\mu}$ that double-covers $M_\mu$ and is hence tessellated by $2\cdot 2^3n = 16n$ right-angled dodecahedra. \prfend

We conclude the paper by providing an example of the construction carried in the proof above.

\begin{figure}
 \begin{center}
  \includegraphics[width = 6 cm]{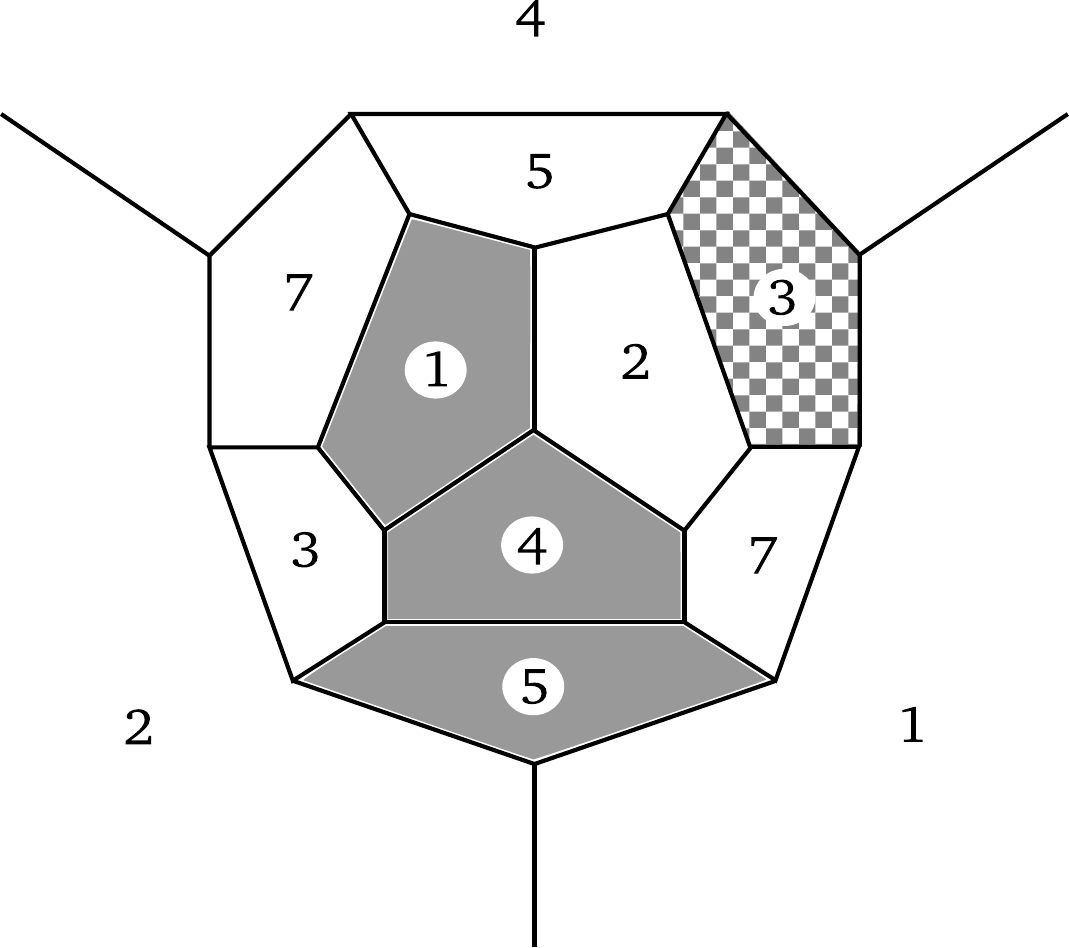}
 \end{center}
 \caption{A non-orientable colouring of $\scrD$ from \cite[Table~1]{GS}. Face colours are encoded by binary numbers}
 \label{dodecahedron-colour:fig}
\end{figure}

\begin{example} 
Let us choose a non-orientable colouring $\mu$ of the dodecahedron $\scrD$ from \cite[Table~1]{GS}, say the one having the maximal symmetry group $(\matZ_2\times \matZ_2)\ltimes \matZ_6$. This is an $\matF^3_2$-colouring depicted in Fig.~\ref{dodecahedron-colour:fig}, with face colours encoded by binary numbers in the decimal range $1,\dots,7$. Here, the grey shaded faces of $\scrD$ are exactly the faces $F$, $F^{\prime}$ and $F^{\prime\prime}$, satisfying the conditions of Corollary~\ref{lambda-orientable:corollary}. The face $F^\ast$, along which we take a connected sum in the proof of Theorem~\ref{theorem}, has a chequerboard shading. Now, we take a connected sum of $\scrD_1 := \scrD$ along $F^\ast$ with its isometric copy $\scrD_2$, having the same colouring. Then we can choose a face of $\scrD_1 \# \scrD_2$, distinct from any of $F$, $F^{\prime}$ or $F^{\prime\prime}$ and continue assembling until we use all $n$ given copies of $\scrD$, which produce a polyhedron $\scrP = \scrD_1\# \dots \# \scrD_n$. The faces $F$, $F^{\prime}$ and $F^{\prime\prime}$ of $\scrD_1$ still remain among those of $\scrP$. Thus, the colouring of $\scrP$ is again non-orientable by Corollary~\ref{lambda-orientable:corollary}.

Finally, by applying Proposition \ref{extend:prop} and Lemma \ref{facet:lemma}, we obtain a four-dimensional polytope $\widetilde{\scrP}$ with an orientable $\matF^5_2$-colouring, that induces a non-orientable colouring on one of its facets, which is isometric to $\mathscr{P}$. Indeed, the non-orientable $\matF^3_2$-colouring of each $\scrD_i$ can be extended by Proposition \ref{extend:prop} to an orientable $\matF^5_2$-colouring of a $120$-cell $\scrZ_i$. Then, by Lemma \ref{facet:lemma}, $\scrP = \scrD_1\# \dots \#\scrD_n$ is a facet of a polytope $\widetilde{\scrP} = \scrZ_1\# \dots \#\scrZ_n$. Since the colourings of $\scrD_i$'s match under taking connected sums, the colourings of $\scrZ_i$'s also match. The polytope $\widetilde{\scrP}$ gives rise to a covering manifold with totally geodesic boundary, as described in the proof of Theorem~\ref{theorem}.
\end{example}


\begin{thebibliography}{99}

\bibitem{Andreev} \textsc{E.M.~Andreev}, \emph{On convex polyhedra in Loba\u{c}evski\u{\i} space}, Math. USSR Sb. \textbf{10} (1970), 413--440.

\bibitem{Davis} \textsc{M.~Davis}, \emph{The geometry and topology of Coxeter groups}, Princeton Univ. Press, (2008), 1--594.

\bibitem{DJ} \textsc{M.~Davis -- T.~Januszkiewicz}, \emph{Convex polytopes, Coxeter orbifolds and torus actions}, Duke Math. J. \textbf{62} (1991),  417--451.

\bibitem{FZ} \textsc{F.T.~Farrell -- S. Zdravkovska}, \emph{Do almost flat manifolds bound?}, Michigan Math. J. \textbf{30} (1983), 199--208. 

\bibitem{F} \textsc{S.~Fisk}, \emph{Coloring the $600$-cell}, {\tt arXiv:0802.2533}

\bibitem{GS} \textsc{A. Garrison -- R. Scott}, \emph{Small covers of the dodecahedron and the $120$-cell}, Proc. Amer. Math. Soc. \textbf{131} (2003), 963--971. 

\bibitem{G1} \textsc{M.~Gromov}, \emph{Manifolds of negative curvature}, J. Differential Geom. \textbf{13} (1987), 223--230. 

\bibitem{G2} \bysame, \emph{Almost flat manifolds}, J. Differential Geom. \textbf{13} (1978), 231--241. 

%\bibitem{E} \textsc{D.B.A.~Epstein -- R.C.~Penner}, \emph{Euclidean decompositions of non-compact hyperbolic manifolds}, J. Diff. Geom. \textbf{27} (1988), 67--80.

%\bibitem{FLS} \textsc{R.~Frigerio -- J.~Lafont -- A.~Sisto}, \emph{Rigidity of high dimensional graph manifolds}, {\tt arXiv:1107.2019}

%\bibitem{FM} \textsc{K.~Fujiwara -- J.F.~Manning}, \emph{Simplicial volume and fillings of hyperbolic manifolds}, Algebraic \& Geometric Topology \textbf{11} (2011), 2237--2264.

%\bibitem{K} \textsc{A.~Kolpakov}, \emph{On the optimality of the ideal right-angled $24$-cell}, Alg. Geom. Topol., \textbf{12} (2012), 1941-1960.

\bibitem{I} \textsc{I.V. Izmestiev}, \emph{Three-dimensional manifolds defined by coloring a simple polytope}, Math. Notes \textbf{69} (2001), 340--346.

\bibitem{KM} \textsc{A.~Kolpakov -- B.~Martelli}, \emph{Hyperbolic four-manifolds with one cusp}, Geom. Funct. Anal. (2013), on-line first.

\bibitem{LR1} \textsc{D.D.~Long -- A.W.~Reid}, \emph{On the geometric boundaries of hyperbolic 
4-manifolds}, Geom. Topol. \textbf{4} (2000) 171--178.

\bibitem{LR2} \bysame, \emph{All flat manifolds are cusps of hyperbolic orbifolds}, Alg. Geom. Topol. \textbf{2} (2002), 285--296.

\bibitem{LR3} \bysame, \emph{Constructing hyperbolic manifolds which bound geometrically}, Math. Research Lett. \textbf{8} (2001), 443–456.

\bibitem{McR} \textsc{D.B.~McReynolds}, \emph{Controlling manifold covers of orbifolds}, Math. Res. Lett. \textbf{16} (2009), 651--662.

\bibitem{McRRS} \textsc{D.B.~McReynolds -- A.W.~Reid -- M.~Stover}, \emph{Collisions at infinity in hyperbolic manifolds}, Math. Proc. Cambridge Philos. Soc. \textbf{155} (2013), 459--463. 

\bibitem{NN} \textsc{H. Nakayama -- Y. Nishimura}, \emph{The orientability of small covers and coloring simple polytopes}, Osaka J. Math. \textbf{42} (2005), 243--256.

\bibitem{N} \textsc{B.E.~Nimershiem}, \emph{All flat three-manifolds appear as cusps of hyperbolic four-manifolds}, Topology and Its Appl. \textbf{90} (1998), 109--133. 

\bibitem{PV} \textsc{L.~Potyagailo -- \`{E}.~Vinberg}, \emph{On right-angled reflection groups in hyperbolic spaces}, Comment. Math. Helv. \textbf{80} (2005), 63--73. 

\bibitem{RT1} \textsc{J.G.~Ratcliffe -- S.T.~Tschantz}, \emph{Gravitational instantons of constant curvature}, Class. Quantum Grav. \textbf{15} (1998) 2613--2627.

\bibitem{RT2} \bysame, \emph{On the growth of the number of hyperbolic gravitational instantons with respect to volume}, Class. Quantum Grav. \textbf{17} (2000) 2999--3007.

\bibitem{Stover} \textsc{M.~Stover}, \emph{On the number of ends of rank one locally symmetric spaces}, Geom. Topol. \textbf{17} (2013), 905--924.

\bibitem{Tschantz} \textsc{S.T.~Tschantz}, \emph{Mathematica program code available on-line at the author's web-page} {\\ \tt http://www.math.vanderbilt.edu/$\sim$tschantz/geosubmanincovers120cell.nb}

\bibitem{Vesnin87} \textsc{A.Yu.~Vesnin}, \emph{Three-dimensional hyperbolic manifolds of L\"obell type}, Siberian Math. J. \textbf{28} (1987), 731--733.

\bibitem{Vesnin} \bysame, \emph{Three-dimensional hyperbolic manifolds with a common fundamental polyhedron}, Math. Notes \textbf{49} (1991), 575--577. 

\bibitem{Vinberg} \textsc{\`{E}.~Vinberg (Ed.)}, ``Geometry II'', Berlin: Springer-Verlag, 1993.

\end{thebibliography}
\end{document}